\numberwithin{equation}{section}
\newcommand{\R}{\mathbb R}
\newcommand{\C}{\mathbb C}
\newcommand{\Z}{\mathbb Z}
\newcommand{\m}{\mathrm{meas}}
\newcommand{\M}{\mathrm{M}}
\newtheorem{X}{X}[section]
\newtheorem{cor}[X]{Corollary}
\newtheorem{lem}[X]{Lemma}
\newtheorem{thm}[X]{Theorem}
\theoremstyle{definition}
\theoremstyle{remark}
\begin{document}

\title{Chebyshev's bias without linear independence}

\author{Mounir Hayani}
\address{Institut de mathématiques de Bordeaux, Université de Bordeaux, Talence, 33400, France}
\email{mounir.hayani@math.u-bordeaux.fr}
\date{}
\maketitle

\begin{abstract}
     We confirm Chebyshev's observation that primes are strikingly more abundant in non-square residue classes modulo a fixed integer under the Generalized Riemann Hypothesis (GRH) by proving a (natural) density $1$ statement for prime counting functions in residue classes where each prime is weighted by its inverse square root. In contrast to the majority of the existing literature on the subject, we do not need to restrict to logarithmic densities to measure Chebyshev's bias, and we do not rely on any hypothesis on the zeros of $L$-functions that is stronger than GRH.
     \newline
     \indent \textit{Note:} The same type of results presented here were independently proved by Arshay Sheth~\cite{Sheth} in the general context of automorphic forms, which implies our main asymptotic. While the spirit of the proofs is similar, Sheth develops an explicit formula for the partial Euler product and uses a result due to Gallagher~\cite{Gallagher} to prove that some estimates hold outside a set of finite logarithmic measure. We share this independent work because it provides a completely self-contained and elementary proof relying only on the usual explicit formula, and it yields explicit error terms rather than an implicit $o(1)$ asymptotic.
\end{abstract}
\section{Introduction}
In a letter~\cite{Chebyshev} sent to Fuss in 1853, Chebyshev observed that primes congruent to $3\bmod\,  4$ seem to appear more often than those congruent to $1\bmod\,  4$. Let $q\ge3$ and let $a\in \Z$ be coprime to $q$, and define $\pi(x;q,a)=\#\{ p\le x\, :\,  p\equiv a\bmod\,  q\}$. Although the prime number theorem in arithmetic progressions asserts that primes are equidistributed among invertible classes modulo $q$, it gives no control over the difference $\pi(x;q,a)-\pi(x;q,b)$ when $b\in\Z$ is coprime to $q$. In 1914, Littlewood~\cite{Littlewood} proved that the function $x\mapsto\pi(x;4,1)-\pi(x;4,3)$ changes sign infinitely often. This naturally raised the question of the existence of the natural density of the set $\mathcal{P}(q;a,b):=\{x\ge 2\, :\, \pi(x;q,a)>\pi(x;q,b)\}$, namely the limit \[ \lim_{X\to \infty}\frac{\m\{2\le x\le X\, :\, x\in \mathcal{P}(q;a,b) \}}{X},\] where $\m$ is the Lebesgue measure on $\R$. This question was first addressed by Kaczorowski~\cite{Kac}, who proved under $\text{GRH}$ that the natural density of $\mathcal{P}(4;3,1)$ does not exist. 

In 1994, Rubinstein and Sarnak~\cite{RS94} established a theoretical framework that enabled them to address the question of Chebyshev's bias for general moduli. They proved, under GRH and a linear independence hypothesis on non-negative imaginary parts of the zeros of Dirichlet $L$-functions (LI), that the logarithmic density \[ \lim_{X\to \infty} \frac{1}{\log X}\int_{2}^X \mathds{1}_{\mathcal{P}(q;a,b)}(u)\frac{\mathrm{d}u}{u}\]
of the set $\mathcal{P}(q;a,b)$ exists. Furthermore, they quantified the bias for $q=4$, proving that the logarithmic density of the set $\mathcal{P}(4;3,1)$ is approximately $0.9959\dots$

More recently, a new framework for studying Chebyshev's bias was introduced by Aoki and Koyama~\cite{Ao-Ko}. They observed that since one of the reasons for the existence of the logarithmic density is that the factor $1/u$ in the integral dampens the contribution of large primes, it would be natural to consider the prime counting function \begin{equation}\label{pi_1/2}
    \pi_{1/2}(x;q,a)=\sum_{\substack{p\le x\\ p\equiv a\bmod\, {q}}}\frac{1}{\sqrt{p}}
\end{equation}
and ask whether the natural density of the set $\mathcal{P}_{1/2}(q;a,b):=\{x\ge 2\, :\, \pi_{1/2}(x;q,a)>\pi_{1/2}(x;q,b)\}$ exists. In their framework, and that of Koyama--Kurokawa~\cite{Ko-Ku}, the idea was to avoid using a hypothesis as strong as LI. Therefore, they worked in the context of the Deep Riemann Hypothesis (DRH), a hypothesis stronger than GRH introduced by K. Conrad~\cite{Conrad}, which states that for all non-principal Dirichlet characters modulo $q$ we have \begin{equation}\label{DRH} (\log x)^{m_\chi} \prod_{p\le x}\left(1-\frac{\chi(p)}{p^{1/2}}\right)^{-1}=\ell_\chi+o(1) \quad(x\to \infty) ,\end{equation}
where $\ell_\chi\ne 0$ and $m_\chi:=\text{ord}_{s=1/2}\,L(s,\chi)$ is the order of vanishing of $L(s,\chi)$ at the central point $s=1/2$. Aoki and Koyama observed that taking the logarithm in~\eqref{DRH}, and using its Taylor expansion, then using Mertens' theorem to estimate $\sum_{p\le x} \chi(p^2)/p$, leads to an estimate of $\sum_{p\le x} \chi(p)/\sqrt{p}$, which allowed them to prove that DRH holds if and only if for all invertible residue classes $a,b$ modulo $q$ one has \begin{equation}\label{DRH-estimate}
    \pi_{1/2}(x;q,a)-\pi_{1/2}(x;q,b)=-\M(q;a,b) \log \log x+C+o(1) \, ,
\end{equation}
    where $\M(q;a,b)=\frac{1}{2\varphi(q)}\left(r(a)-r(b)+2\sum_{\chi \ne \chi_0}(\chi(a)-\chi(b))m_\chi\right)$, $r:(\Z/q\Z)^\times\to \Z_{\ge 0}$ is defined by $r(a)=\#\{ x\in (\Z/q\Z)^\times\, :\, x^2=a\}$ and $C$ is a constant depending on $q,a,b$. In a very recent paper, Suzuki~\cite{Su} studied the weighted Chebyshev-type function $\psi_{1/2}(x):=\sum_{n\le x}\Lambda(n)/\sqrt{n}$ and its variants in arithmetic progressions. The author proved that GRH holds for the non-principal Dirichlet character modulo $4$ if and only if  
\[\sqrt{x}\left(\sum_{\substack{p\le x\\ p\equiv 1\bmod\, 4}}\frac{\log p}{\sqrt{p}}\log\left(\frac{x}{p}\right)-\sum_{ \substack{p\le x\\ p\equiv 3 \bmod\, 4}}\frac{\log p}{\sqrt{p}}\log \left(\frac{x}{p}\right) \right)\sim-(\log x)^2 \frac{\sqrt{x}}{4}\quad (x\to \infty)\, .\]

This suggests that studying the asymptotic behavior of the difference of prime counting functions of the same type as~\eqref{pi_1/2} requires either hypotheses stronger than GRH (such as DRH in Aoki--Koyama's work), or additional smoothing weights ($\log p \log (x/p)$ in Suzuki's work) which make the contribution of large primes smaller. 

In this paper, we show that GRH is sufficient to prove that the equality~\eqref{DRH-estimate} holds for a set of $x\ge 2$ of natural density $1$. More precisely: 
\begin{thm}\label{main}
    Let $q\ge 3$ and assume \text{GRH} for all non-principal Dirichlet characters modulo $q$. Let $\varepsilon>0$ and let $a,b$ be distinct invertible residue classes modulo $q$. Then there exists a (unique) constant $C$ depending on $q,\, a,$ and $b$, such that the natural density of the set \[\left\{x\ge 2\, :\, \left| \pi_{1/2}(x;q,a)-\pi_{1/2}(x;q,b) +\M(q;a,b)\log \log x-C\right|\le \frac{(\log \log x)^{3+\varepsilon}}{\log x}\right\}\]
    exists and equals $1$. Furthermore, there exists $K=K(a,b)>1$ such that the logarithmic density of the set 
    \[\left\{x\ge 2\, :\, \left| \pi_{1/2}(x;q,a)-\pi_{1/2}(x;q,b) +\M(q;a,b)\log \log x-C\right|\le \frac{K \log \log x}{\log x}\right\}\]
    exists and equals $1$.
\end{thm}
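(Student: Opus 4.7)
My plan proceeds in three stages.

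\textbf{Stage 1: Reduction to $S_\chi$.} By Dirichlet-character orthogonality modulo $q$, writing $S_\chi(x):=\sum_{p\le x}\chi(p)/\sqrt p$,
\[
\pi_{1/2}(x;q,a)-\pi_{1/2}(x;q,b)=\frac{1}{\varphi(q)}\sum_{\chi\ne\chi_0}\bigl(\overline{\chi(a)}-\overline{\chi(b)}\bigr)\,S_\chi(x),
\]
so it suffices to establish, for each non-principal $\chi$, an asymptotic
\[
S_\chi(x)=-\Bigl(m_\chi+\tfrac12\mathds{1}_{\chi^2=\chi_0}\Bigr)\log\log x+C_\chi+E_\chi(x),
\]
with $E_\chi(x)$ small on a set of density $1$. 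Summing over $\chi$, the real-character part gives $\sum_{\chi^2=\chi_0,\chi\ne\chi_0}(\chi(a)-\chi(b))=r(a)-r(b)$ and the $m_\chi$-weighted part gives $2\sum_{\chi\ne\chi_0}(\chi(a)-\chi(b))m_\chi$, combining to yield exactly the coefficient $-\M(q;a,b)/(2\varphi(q))$ claimed by the theorem, and $C:=\varphi(q)^{-1}\sum_\chi(\overline{\chi(a)}-\overline{\chi(b)})C_\chi$.

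\textbf{Stage 2: Main terms via explicit formula.} Split $\psi_\chi(x)=\theta_\chi(x)+\psi^{(2)}_\chi(x)+O(x^{1/3})$, where $\theta_\chi$ sums over primes and $\psi^{(2)}_\chi$ over prime squares. Abel summation yields
\[
S_\chi(x)=\frac{\theta_\chi(x)}{\sqrt x\log x}+\int_2^x\theta_\chi(t)\left(\frac{1}{2t^{3/2}\log t}+\frac{1}{t^{3/2}(\log t)^2}\right)\mathrm dt.
\]
The $\psi^{(2)}_\chi$-contribution equals $-\tfrac12\log\log x+\mathrm{const}+o(1)$ when $\chi^2=\chi_0$ (by Mertens' theorem) and converges to a constant otherwise. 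For $\psi_\chi$, the GRH explicit formula gives $\psi_\chi(t)=-\sum_{|\gamma|\le T}t^{1/2+i\gamma}/(1/2+i\gamma)+O(t(\log qtT)^2/T)$. Inserting this, the zero $\rho=1/2$ of multiplicity $m_\chi$ contributes $-m_\chi\log\log x+O(1)$ via $-m_\chi\int_2^x\mathrm dt/(t\log t)$, while each non-central zero $\rho=1/2+i\gamma$, after one integration by parts in $u=\log t$, contributes a constant $I_\gamma=O(|\gamma|^{-2})$ (absorbed into $C_\chi$) plus an oscillatory remainder. Collecting:
\[
E_\chi(x)=\frac{-1}{\log x}\sum_{\substack{\gamma\ne 0\\ |\gamma|\le T}}\frac{b_\gamma\,x^{i\gamma}}{1/2+i\gamma}+R(x,T),
\]
with $|b_\gamma|\ll 1$ and a truncation error $R(x,T)$ that is $o(1)$ pointwise when $T$ is a fixed positive power of $X$.

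\textbf{Stage 3: $L^2$-mean, density, and the main obstacle.} I estimate the $L^2$-mean $\frac{1}{X}\int_2^X|E_\chi(x)|^2\mathrm dx$ by expanding the square: the diagonal $\gamma=\gamma'$ contributes $\ll(\log X)^{-2}\sum_\gamma|1/2+i\gamma|^{-2}=O((\log X)^{-2})$, and the off-diagonal pairs are handled by a Montgomery--Vaughan mean-value theorem for the Dirichlet polynomial $\sum_\gamma b_\gamma(1/2+i\gamma)^{-1}e^{i\gamma u}$, combined with the vertical zero-density bound $N(T+1)-N(T)\ll\log T$ under GRH. This gives an $L^2$-mean of $O((\log X)^{-1+o(1)})$, and Chebyshev's inequality yields
\[
\m\bigl\{2\le x\le X\,:\,|E_\chi(x)|>(\log x)^{-1/2+\varepsilon}\bigr\}\ll X(\log X)^{-2\varepsilon},
\]
proving the natural-density statement. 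For the logarithmic density, I replace $\mathrm dx$ by $\mathrm dx/x$ and change variables to $u=\log x$; the extra weight $1/u^2$ makes the relevant off-diagonal sums absolutely summable, saving a full factor $\log X$ in the $L^2$-mean and upgrading the threshold to $(\log x)^{-1+\varepsilon}$. \textbf{The main obstacle} is precisely the off-diagonal analysis in Stage 3: the zero-sum $\sum_\gamma x^{i\gamma}/(1/2+i\gamma)$ is only conditionally convergent, so dominating by absolute values is lossy; one must genuinely exploit oscillation between distinct ordinates, uniformly in the truncation height $T$ (chosen as a fixed power of $X$ balancing the Perron remainder $R(x,T)$) and across the finitely many non-principal characters modulo $q$.
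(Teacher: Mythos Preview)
Your approach is essentially the paper's: Abel summation to pass from $\pi_{1/2}$ to the unweighted count, the explicit formula to isolate the $-\tfrac{\M}{2\varphi(q)}\log\log x$ main term (central zeros plus prime squares), and a second-moment bound on the oscillating zero sum together with Markov's inequality to control the exceptional set. The paper packages this a bit differently: it works with a single test function $t$ rather than character-by-character, uses $\pi$ rather than $\theta$ in the Abel summation, and---more substantively---carries out the whole argument in the variable $y=\log x$, so that the needed second moment is just the classical $\tfrac{1}{Y}\int_{\log 2}^Y|\Delta(y;t)|^2\,\mathrm{d}y\ll 1$, which follows from the elementary zero-pair estimate $\sum_{\gamma_\chi,\gamma_\lambda}|\gamma_\chi\gamma_\lambda|^{-1}(1+|\gamma_\chi-\gamma_\lambda|)^{-1}<\infty$ (no Montgomery--Vaughan needed). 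This buys two simplifications over your Stage~3: no truncation parameter $T$ has to be tracked through the $L^2$ computation (one lets $T\to\infty$ in the integrated explicit formula, where the zero sum is absolutely convergent), and the natural-density conclusion comes for free from showing the exceptional set has \emph{finite} $y$-measure, via $\int_{\log 2}^\infty|\Delta(y;t)|^2 y^{-1-2\varepsilon}\,\mathrm{d}y<\infty$, rather than merely that its proportion in $[2,X]$ tends to~$0$. Your quoted $L^2$-mean $O((\log X)^{-1+o(1)})$ is in fact $O((\log X)^{-2})$, and your logarithmic-density sketch (``saving a full factor of $\log X$'') amounts precisely to the paper's observation that $\tfrac{1}{Y}\int_{\log 2}^Y|\Delta(y;t)|^2 y^{-2\varepsilon}\,\mathrm{d}y\to 0$.
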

Assuming that $m_\chi=0$ for all $\chi \bmod\,  q$, we have $\M(q;a,b)=(r(a)-r(b))/(2\varphi(q))$. Therefore, if $r(a)<r(b)$, then assuming GRH for all non-principal Dirichlet characters modulo $q$, the natural density of the set $\mathcal{P}_{1/2}(q;a,b)$ exists and equals $1$. Thus, Theorem~\ref{main} explains Chebyshev's bias, thanks to these weighted prime counting functions, without the need for any strong additional hypotheses (such as LI or DRH). For historical reasons, we state the particular case $q=4$:
\begin{cor}
    Assume GRH for the non-principal character modulo $4$. Then, the natural density of the set $ \mathcal{P}_{1/2}(4;3,1)=\{ x\ge 2\, :\, \pi_{1/2}(x;4,3)>\pi_{1/2}(x;4,1)\}$
    exists and equals $1$.
\end{cor}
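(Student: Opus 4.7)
The corollary is an immediate consequence of Theorem~\ref{main} applied to $q=4$, $a=3$, $b=1$, so I would reduce the problem to checking the sign of the coefficient of $\log\log x$ in the density-$1$ asymptotic.

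First, I would compute $\M(4;3,1)$. The only square in $(\Z/4\Z)^\times=\{1,3\}$ is $1$, so $r(3)=0$ and $r(1)=2$. Denoting by $\chi$ the unique non-principal character modulo $4$, we have $\chi(3)-\chi(1)=-2$, and the definition of $\M$ gives
\[\M(4;3,1)=r(3)-r(1)+2(\chi(3)-\chi(1))m_\chi=-2-4m_\chi,\]
where $m_\chi\ge 0$ is the order of vanishing of $L(s,\chi)$ at $s=1/2$. Therefore
\[-\frac{\M(4;3,1)}{2\varphi(4)}=\frac{1}{2}+m_\chi\ge \frac{1}{2}>0,\]
and crucially this sign is determined already by the contribution $r(3)-r(1)=-2<0$, without needing to rule out the (conjecturally absent) possibility $L(1/2,\chi)=0$.

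Next, I would apply Theorem~\ref{main} with, say, $\varepsilon=1/4$. This yields a constant $C$ such that on a set $S$ of natural density $1$,
\[\pi_{1/2}(x;4,3)-\pi_{1/2}(x;4,1)=\Bigl(\tfrac{1}{2}+m_\chi\Bigr)\log\log x+C+R(x),\qquad |R(x)|\le (\log x)^{-1/4}.\]
Because the main term tends to $+\infty$ and $R(x)\to 0$, the right-hand side is strictly positive for all $x\ge x_0$, where $x_0$ depends only on $C$ and $m_\chi$. Hence $S\cap[x_0,\infty)\subseteq \mathcal{P}_{1/2}(4;3,1)$; since $S$ has natural density $1$ and differs from $S\cap[x_0,\infty)$ by a bounded set, $\mathcal{P}_{1/2}(4;3,1)$ has natural density $1$ as well.

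The genuine mathematical substance lies entirely in Theorem~\ref{main}; given that theorem, the corollary is a routine sign computation together with the elementary observation that bounded modifications do not affect natural densities. Thus the only ``obstacle'' I would anticipate is verifying that the sign of $\M(4;3,1)$ is independent of the value of $m_\chi$, which is exactly the point where the definition $\M(q;a,b)=r(a)-r(b)+2\sum_{\chi\ne\chi_0}(\chi(a)-\chi(b))m_\chi$ cooperates: both the quadratic-residue discrepancy and any additional vanishing at the central point push the bias in the same direction (toward the class $3\bmod 4$).
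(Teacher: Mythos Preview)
Your proof is correct and follows the same route as the paper: deduce the corollary directly from Theorem~\ref{main} by checking that $\M(4;3,1)<0$. In fact you are slightly more careful than the paper's own discussion, which justifies the corollary only under the additional assumption $m_\chi=0$; your observation that $\M(4;3,1)=-2-4m_\chi<0$ regardless of $m_\chi\ge 0$ is exactly what is needed to match the corollary's stated hypothesis (GRH alone).
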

Our second main result shows that if GRH holds, then the equality~\eqref{DRH} holds for a set of natural density $1$.
\begin{thm}\label{GRHalmimpliesDRH}
    Assume GRH for a non-principal Dirichlet character $\chi$ modulo $q$. Then, there exists $\ell_\chi\ne 0$ such that, for all $\varepsilon>0$, the natural density of the set 
    \begin{equation}\label{Echi}
        \mathcal{E}_\chi:=\left\{x\ge 2\, :\, \left|  (\log x)^{m_\chi} \prod_{p\le x}\left(1-\frac{\chi(p)}{p^{1/2}}\right)^{-1}-\ell_\chi  \right|\le \frac{(\log \log x)^{3+\varepsilon}}{\log x} \right\}
    \end{equation}
    exists and equals $1$.
\end{thm}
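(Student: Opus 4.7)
The plan is to take logarithms and reduce to a density-$1$ estimate for the single-character sum $S_\chi(x) := \sum_{p\le x}\chi(p)/\sqrt p$, which is the single-character input underlying Theorem~\ref{main}. Setting $\Phi(x) := (\log x)^{m_\chi}\prod_{p\le x}(1-\chi(p)/\sqrt p)^{-1}$ and expanding the logarithm via $-\log(1-z) = \sum_{k\ge 1} z^k/k$, one obtains
$$
\log\Phi(x) = m_\chi\log\log x + S_\chi(x) + \tfrac12\sum_{p\le x}\tfrac{\chi(p)^2}{p} + R(x),
$$
where $R(x) := \sum_{p\le x}\sum_{k\ge 3}\chi(p)^k/(kp^{k/2})$ converges absolutely to a constant $R_\chi$ with error $O(x^{-1/2})$.

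For the Mertens-type sum $\tfrac12\sum_{p\le x}\chi(p)^2/p$, I would split on whether the character $\chi^2$ is principal. If $\chi$ is real (i.e.\ of order $2$), then $\chi^2 = \chi_0$ and Mertens gives $\tfrac12\sum_{p\le x,\, p\nmid q}1/p = \tfrac12\log\log x + A_\chi + O(1/\log x)$. If $\chi^2$ is non-principal, Siegel--Walfisz (unconditionally) yields $\tfrac12\sum_{p\le x}\chi(p)^2/p = A_\chi + O((\log x)^{-A})$ for any fixed $A>0$. Setting $\delta_\chi = 1$ if $\chi$ is real and $\delta_\chi = 0$ otherwise, both cases collapse to $\tfrac12\sum_{p\le x}\chi(p)^2/p = \tfrac{\delta_\chi}{2}\log\log x + A_\chi + O(1/\log x)$.

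The crux is to establish, under GRH for $L(s,\chi)$, that there is a constant $B_\chi$ such that the natural density of
$$
\Bigl\{x\ge 2 : \bigl|S_\chi(x) + (m_\chi + \tfrac{\delta_\chi}{2})\log\log x - B_\chi\bigr| \le (\log x)^{-1/2+\varepsilon}\Bigr\}
$$
equals $1$. This is the single-character analogue of Theorem~\ref{main} and is established by the same method: the GRH explicit formula for $\theta_{1/2}(x,\chi) := \sum_{p\le x}\chi(p)\log p/\sqrt p$ produces a $-(m_\chi + \delta_\chi/2)\log x$ main term — with $m_\chi\log x$ coming from the zero of $L(s,\chi)$ at $s=1/2$ and $(\delta_\chi/2)\log x$ from the contribution of prime squares when $\chi$ is real — plus a fluctuating part which, after Abel summation to pass from $\theta_{1/2}$ to $S_\chi$, has the form $(1/\log x)\cdot F_\chi(\log x)$ with $F_\chi$ almost-periodic and built from the non-trivial zeros of $L(s,\chi)$; a mean-square estimate of order $X/\log X$ on this fluctuation combined with Chebyshev's inequality then yields the density-$1$ conclusion at the rate $(\log x)^{-1/2+\varepsilon}$.

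Putting the three pieces together, on a set of natural density $1$ one has $\log\Phi(x) = (A_\chi + B_\chi + R_\chi) + O((\log x)^{-1/2+\varepsilon})$, hence $\Phi(x) = \ell_\chi + O((\log x)^{-1/2+\varepsilon})$ with $\ell_\chi := \exp(A_\chi + B_\chi + R_\chi) \ne 0$. The main obstacle is the third step: obtaining the density-$1$ bound for $S_\chi(x)$ under GRH alone, without LI, is the technical heart shared with the proof of Theorem~\ref{main}, the delicate point being the sharp $L^2$ bound on the fluctuating part of the explicit formula.
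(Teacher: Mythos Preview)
Your proposal is correct and follows essentially the same route as the paper: expand $\log\Phi(x)$, control the $k=2$ term by a Mertens-type estimate (the paper writes this uniformly as $\sum_{p\le x}\chi(p^2)/p=\langle\chi,r\rangle\log\log x+c+O(1/\log x)$ rather than splitting on whether $\chi^2$ is principal, but $\langle\chi,r\rangle=\delta_\chi$ so this is the same), and reduce to the density-$1$ estimate for $S_\chi=\pi_{1/2}(\cdot,\chi)$, which is precisely the $t=\chi$ case of the formula~\eqref{main-form} established in the proof of Theorem~\ref{main}. The only cosmetic difference is that the paper's Abel summation passes through $\pi(\cdot,\chi)$ rather than your $\theta_{1/2}(\cdot,\chi)$, and the precise $L^2$ input is Lemma~\ref{variance} (bounded second moment of $\Delta(y;\chi)$ on the logarithmic scale) combined with Lemma~\ref{ing}(3), after which Markov's inequality gives the density-$1$ bound; your phrase ``mean-square estimate of order $X/\log X$'' should be sharpened to this statement.
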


Our last main result concerns the mean $\frac{1}{x}\int_2^x(\pi_{1/2}(u;q,a)-\pi_{1/2}(u;q,b))\mathrm{d}u$, our method allows us to obtain, under GRH, the exact estimate that one would obtain assuming the DRH estimate~\eqref{DRH-estimate}:

\begin{thm}\label{mean}
    Let $q\ge 3$ and assume \text{GRH} for all non-principal Dirichlet characters modulo $q$. Let $a,b$ be distinct invertible residue classes modulo $q$. Then 
    \[\frac{1}{x}\int_{ 2}^x \bigl(\pi_{1/2}(u;q,a)-\pi_{1/2}(u;q,b)\bigr)\mathrm{d}u=-\M(q;a,b)\log \log x+C+O\left(\frac{\log \log x}{\log x}\right)\, .\]
\end{thm}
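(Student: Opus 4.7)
The plan is to use character orthogonality to reduce the theorem to proving, for each non-principal Dirichlet character $\chi\bmod\, q$, the pointwise asymptotic
\[
M_\chi(x) := \frac{1}{x}\int_2^x S_\chi(u)\,\mathrm{d}u = -\Bigl(m_\chi+\frac{a_\chi}{2}\Bigr)\log\log x + C_\chi + O\Bigl(\frac{\log\log x}{\log x}\Bigr)
\]
under GRH, where $S_\chi(u) := \sum_{p\le u}\chi(p)/\sqrt{p}$ and $a_\chi := 1$ if $\chi^2 = \chi_0$ and $0$ otherwise. A single Fubini exchange rewrites the mean as the Cesàro-smoothed prime sum $M_\chi(x) = \sum_{p\le x}(\chi(p)/\sqrt{p})(1-p/x)$. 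Summing these per-character asymptotics with weights $(\overline{\chi(a)}-\overline{\chi(b)})/\varphi(q)$ and using the identity $r(a)-r(b) = \sum_{\chi^2=\chi_0,\,\chi\neq\chi_0}(\chi(a)-\chi(b))$ reconstructs precisely the coefficient $\M(q;a,b)/(2\varphi(q))$ stated in the theorem.

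To obtain the per-character estimate I would use a Mellin representation with the Cesàro kernel $1/(s(s+1))$, which is the Mellin transform of $\phi(y) := (1-y)\mathds{1}_{y\le 1}$: for $c > 1/2$,
\[
M_\chi(x) = \frac{1}{2\pi i}\int_{(c)} G_\chi(s)\,\frac{x^s}{s(s+1)}\,\mathrm{d}s, \quad G_\chi(s) := \sum_p\frac{\chi(p)}{p^{s+1/2}} = \log L(s+\tfrac12,\chi) - R_\chi(s),
\]
with $R_\chi(s) := \sum_{k\ge 2}\frac{1}{k}\sum_p\chi(p)^k p^{-k(s+1/2)}$ holomorphic for $\Re s > -1/4$ apart from the logarithmic singularity $-\tfrac{a_\chi}{2}\log s$ at $s=0$ inherited from the pole of $\zeta(2s+1)$ when $\chi^2=\chi_0$. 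Under GRH, $\log L(s+\tfrac12,\chi)$ extends to $\{\Re s\ge 0\}$ with only logarithmic branch points at $s = i\gamma$ for $\gamma$ an imaginary part of a nontrivial zero of $L(\cdot,\chi)$; in particular $G_\chi(s) = (m_\chi + a_\chi/2)\log s + F_\chi(s)$ near $s=0$ with $F_\chi$ holomorphic there. I would then shift the contour to $\Re s = -\delta$ by Hankel-type deformations around each branch point. The singular part at $s=0$ yields the main term via the classical identity $\frac{1}{2\pi i}\int_{(c)}\log s\cdot x^s/s\,\mathrm{d}s = -\log\log x - \gamma$ (obtained by differentiating $(\log x)^{a-1}/\Gamma(a)$ at $a=1$) together with the analogous $1/(s+1)$ contribution, which is a constant plus $O(1/x)$ from the residue at $s=-1$. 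Each nonzero branch point $s=i\gamma$ contributes an explicit Hankel integral whose leading behaviour by Watson's lemma applied to $\int_0^\infty x^{-v}\cdot((i\gamma-v)(i\gamma-v+1))^{-1}\mathrm{d}v$ is $m_\gamma x^{i\gamma}/[i\gamma(i\gamma+1)\log x]$, with subleading corrections of order $1/(|\gamma|^2(\log x)^2)$.

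The decisive point — and the main obstacle turned into an advantage — is the control of the resulting sum over zeros $\frac{1}{\log x}\sum_\gamma m_\gamma x^{i\gamma}/[i\gamma(i\gamma+1)]$. The extra factor $1/(s+1)$ supplied by the Cesàro smoothing makes this series \emph{absolutely} convergent via the Riemann--von Mangoldt bound $N_\chi(T) = O(T\log(qT))$, since $\sum_\gamma 1/(|\gamma|(|\gamma|+1)) < \infty$; this yields a genuine pointwise bound $O(1/\log x)$ on the oscillating contribution. In contrast, without this smoothing the corresponding sum $\sum_\gamma m_\gamma x^{i\gamma}/(i\gamma)$ governing $S_\chi(x)$ itself is only conditionally convergent under GRH — precisely the obstruction that restricts Theorems~\ref{main} and~\ref{GRHalmimpliesDRH} to a set of density one. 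The remaining technical work is careful bookkeeping of the Hankel deformations around the critical-line branch points: choosing the branch of $\log s$ consistently (so that the imaginary contributions from the residue at $s=-1$ cancel upon pairing $\chi$ with $\overline{\chi}$), verifying uniform control on $\log L(s+\tfrac12,\chi)$ along the shifted contour, and justifying absolute convergence of the zero sum without any linear-independence hypothesis. Combining the main term, the $O(1/\log x)$ zero contribution, the subleading Watson corrections of size $O(1/(\log x)^2)$, and the $O(x^{-\delta})$ residual integral on $\Re s = -\delta$ delivers the advertised error $O(\log\log x/\log x)$ with room to spare.
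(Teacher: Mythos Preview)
Your approach is valid and reaches the same conclusion, but it is genuinely different from the paper's argument. The paper does not use a Mellin transform of $\log L(s,\chi)$ at all. Instead it relies on the identity~\eqref{main-form}, which was already established in the proof of Theorem~\ref{main} via Abel summation linking $\pi_{1/2}$ to $\pi$ and the standard explicit formula for $\psi$. After multiplying~\eqref{main-form} by $e^y$ and integrating over $y\in[\log 2,\log x]$, the only nontrivial term is $\int_{\log 2}^{Y}\frac{e^y}{y}\Delta(y;t)\,\mathrm{d}y$; one integration by parts reduces this to the antiderivative $G(Y;t)=\int_{\log 2}^{Y}\Delta(u;t)\,\mathrm{d}u$, and Lemma~\ref{ing}(1) gives $G(Y;t)\ll\log Y$ because the integrated zero sum $\sum_{\gamma}\frac{e^{i\gamma Y}}{i\gamma(\frac12+i\gamma)}$ is absolutely convergent. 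This is exactly the mechanism you identify---the extra smoothing buys a second factor of $1/|\gamma|$---but the paper accesses it through the $x^\rho/\rho$ explicit formula (simple poles, no branch cuts), which makes the whole proof a few lines of real-variable calculus once Lemma~\ref{ing} is in hand. Your route through Hankel deformations of $\log L(s+\tfrac12,\chi)$ around infinitely many logarithmic branch points on the critical line is correct in outline but carries substantially more technical overhead (uniform bounds on $\log L$ to the left of $\Re s=0$, truncation and $T\to\infty$ justification); the paper avoids all of this.

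One small imprecision: the contribution of $\log s\cdot x^s/(s+1)$ near $s=0$ is not ``a constant plus $O(1/x)$'' but rather $O(1/\log x)$ via the Hankel integral, since there is only a branch point there and no pole; you cannot shift past $s=-1$ without first controlling $\log L$ and $R_\chi$ in $-1<\Re s<-1/4$, which you have not set up. This does not affect the final estimate, and in fact if your programme is executed carefully it should yield $O(1/\log x)$ rather than the paper's $O(\log\log x/\log x)$, since you bypass the $O(1/y)$ loss incurred when passing from $\psi$ to $\pi$ in~\eqref{expl-form}.
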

Although our arguments extend straightforwardly to the context of global fields, we restrict our focus to the classical case of primes in arithmetic progressions for the sake of clarity and to highlight the underlying ideas.
To establish our results, we rely on the explicit formula for prime counting functions. However, a direct application of this formula to the natural auxiliary Chebyshev function $\psi_{1/2}$ associated with the weighted function $\pi_{1/2}$ would be inconclusive: the resulting error term is of order $O(\log x)$, while its main term is not expected to be that large; in fact the main term is expected to be of order $O((\log \log \log x)^2)$ by Montgomery's conjecture~\cite{Montg}. Instead, our strategy is to relate $\pi_{1/2}$ to $\pi$ via summation by parts. We show (using Lemma~\ref{ing}) that, thanks to the explicit formula for $\pi$, the integral term arising from this summation yields the main bias term
\[-\M(q;a,b) \log \log x +C + O\left(\frac{\log \log x}{\log x}\right).\]
The remaining non-integral term is essentially $(\pi(x;q,a)-\pi(x;q,b))/\sqrt{x}$. In order to bound the size of the set of $x\ge 2$ for which the deviation exceeds $(\log \log x)^{3+\varepsilon}/(\log x)$, we use Markov's inequality (Lemma~\ref{lem:nat-density}) which reduces the problem to obtaining a uniform estimate for higher moments of the error term of primes in arithmetic progressions. Building on the techniques of Puchta~\cite{Puchta}, we establish a uniform bound for these moments in Lemma~\ref{variance} by refining the estimates of the multisums over zeros, a result which may be of independent interest. Since our estimates are naturally formulated on a logarithmic scale, we deduce the existence of the natural density, by showing that the set of these deviations has finite logarithmic measure. Indeed, any Borel set $A\subset (0,\infty)$ satisfying $\int_0^\infty \mathds{1}_A(e^y)\mathrm{d}y<\infty$ has zero natural density. To see this, note that for any $\mathcal{L}>1$,
 \begin{equation}\label{nat-density}
    0\le \limsup_{x\to \infty}\frac{1}{x} \int_1^x \mathds{1}_A(u)\mathrm{d}u\le \limsup_{x\to \infty}\int_{\mathcal{L}}^x \mathds{1}_A(u) \frac{\mathrm{d}u}{u}=\int_{\log \mathcal{L}}^\infty \mathds{1}_{A}\bigl(e^y\bigr)\mathrm{d}y\, .
\end{equation}
The result follows by letting $\mathcal{L}\to \infty$.
\subsection*{Acknowledgments} 
I am very grateful to Alexandre Bailleul for his detailed reading and for kindly pointing out that similar results were independently proved by Arshay Sheth~\cite{Sheth} in the general context of automorphic forms. I also thank Sheth for subsequently reaching out to discuss this work. Finally, I would like to thank Florent Jouve and Daniel Fiorilli for their constant encouragement and for their suggestions, which were essential to the development of this paper.

\section{Preliminaries}\label{sec:Prel}
Let $q\ge3$ and let $t\, :\, (\Z/q\Z)^\times\to\C$ be a non-zero function satisfying \(\langle t,\chi_0\rangle=0\, ,\)
where $\chi_0$ is the principal character modulo $q$, and where \[ \langle f,g\rangle :=\frac{1}{\varphi(q)}\sum_{a\in (\Z/q\Z)^\times} f(a)\overline{g(a)}\]
for all functions $f,g\, :\, (\Z/q\Z)^\times\to \C$.
We define for $x\ge 2$ \[ \pi_{1/2}(x;t):=\sum_{p\le x} \frac{t(p)}{p^{1/2}}\ \text{ and }\ \pi(x;t):=\sum_{p\le x} t(p)\, \]
where the sums run over primes $p\le x$ that are invertible modulo $q$. We note that \[\pi_{1/2}(x;q,a)-\pi_{1/2}(x;q,b)=\pi_{1/2}\bigl(x; \mathds{1}_{\{a\}}-\mathds{1}_{\{b\}} \bigr).\]
Therefore, we study the case of a general function $t$ on $(\Z/q\Z)^\times$ satisfying $\langle t,\chi_0\rangle=0$.
We define the usual auxiliary prime counting functions: 
\[\psi(x;t):=\sum_{n\le x} \Lambda(n)t(n)\quad \text{and}\quad \theta(x;t):=\sum_{p\le x} t(p)\log p \, .\]
By linearity we have $\psi(x;t)=\sum_{\chi \ne\chi_0} \langle t,\chi \rangle \psi(x;\chi)$. Thus, using~\cite{MV}*{ Theorem 12.12}, and assuming GRH for all characters $\chi$ modulo $q$ satisfying $\langle t,\chi \rangle\ne 0$, the explicit formula for $\psi(x;t)$ can be written as  \begin{equation}\label{psi-expl-form}
    \psi(x;t)=-\sum_{\chi \ne \chi_0}\langle t,\chi\rangle \sum_{0\le |\gamma_\chi|\le T} \frac{x^{\frac{1}{2}+ i\gamma_\chi}}{\frac{1}{2}+i\gamma_\chi}+O\left(x\frac{(\log T)^2}{T}+\log x\right),
\end{equation}
where the implied constants depend on $q$ and $t$. We now define for all $y\ge1$: 
\begin{equation}
    \Delta(y;t):=y\frac{\pi(e^{y};t)}{\exp(y/2)}+2\M(t)\, ,
\end{equation}
where $\M(t):=\bigl(\langle t,r\rangle +2\sum_{\chi\ne \chi_0}\langle t,\chi\rangle m_\chi\bigr)/2$. Following~\cite{RS94}*{Lemma 2.1}, we deduce that, if $\text{GRH}$ holds for all Dirichlet characters modulo $q$ satisfying $\langle t,\chi\rangle \ne 0$, then uniformly for $y\ge 1$
\begin{equation}\label{expl-form}\Delta(y;t)=-\sum_{\chi \ne\chi_0}\langle t,\chi\rangle\sum_{0<|\gamma_\chi|\le T} \frac{e^{iy\gamma_\chi}}{\frac{1}{2}+i\gamma_\chi}+O\left(\frac{(\log T)^2 e^{y/2}}{T}+\frac{1}{y}\right)\, .\end{equation}
The study of the moments of $\Delta(y;t)$ has been a central problem in analytic number theory. It is known (see~\cites{RS94, FJ, Dev}) that, under GRH, $y\mapsto \Delta(y;t)$ is $B^2$ almost-periodic; in particular, its second moment is finite. While Puchta~\cite{Puchta} proved, under GRH, that for fixed $q$ and $k$, the $k$-th moment of $y\mapsto \psi(e^y;\chi)/\exp(y/2)$ is finite, determining the uniform asymptotic behavior of $\frac{1}{Y}\int_{\log 2}^Y (\Delta(u;\chi))^k \mathrm{d}u$ requires additional assumptions, such as LI (see~\cite{Hooley}). More recently, to circumvent the use of LI, de la Bretèche and Fiorilli~\cite{BF} (see also \cite{BFJ}) studied moments of weighted prime counting functions, for which they established uniform lower bounds. The following lemma provides an upper bound for the $2k$-th moment of $\Delta(y;t)$ that is uniform in $k$. For clarity, we postpone its technical proof to the appendix.
\begin{lem}\label{variance}
    Assume GRH for all Dirichlet characters $\chi$ such that $\langle t,\chi\rangle \ne 0$. There exists $\mathcal{C}=\mathcal{C}(t)>1$ such that for all $k\ge 1$ we have \[ \frac{1}{Y}\int_{\log 2}^Y |\Delta(y;t)|^{2k}\mathrm{d}y \le (\mathcal{C} k)^{4k}\qquad (Y\ge 1)\, . \]
\end{lem}
We now put Lemma~\ref{variance} to use: 
\begin{lem}\label{lem:nat-density}
    Assume GRH for all Dirichlet characters $\chi$ such that $\langle t,\chi\rangle \ne 0$. For all $\varepsilon>0$, there exists $Y_0=Y_0(t,\varepsilon)>1$ such that for all $Y\ge Y_0$ we have
    \[ \m \left\{Y\le y<2Y \, :\, |\Delta(y;t)|>(\log y )^{3+\varepsilon}\right\}\le \frac{2}{Y^{\varepsilon/2}}\, , \]
    where $\m$ is the Lebesgue measure on $\R$.
\end{lem}
\begin{proof}
    Let $\varepsilon>0$ and let $Y\ge 2$ be sufficiently large (in terms of $\varepsilon, t,$ and $q$). Let $k=k(Y)$ be the integer such that $2k$ is the least even integer larger than $(\log Y)/(\log \log Y)$. We have, for all $y\ge Y$, $(\log y)^{2k} \ge Y$. Moreover, if $\mathcal{C}=\mathcal{C}(t)$ is a constant given by Lemma~\ref{variance}, then $2k (\log \mathcal{C}k) \sim \log Y$. Since $Y$ is sufficiently large, we have $2k(\log \mathcal{C}k)<(1+\varepsilon/4)\log Y$. Thus $(\mathcal{C}k)^{4k}<Y^{2+\varepsilon/2}$. Applying Markov's inequality, together with Lemma~\ref{variance}, we obtain
    \begin{align*}
        \m \{Y\le y<2Y \, :\, &|\Delta(y;t)|>(\log y )^{3+\varepsilon}\} \le \int_Y^{2Y}\frac{|\Delta(y;t)|^{2k}}{(\log y)^{2k(3+\varepsilon)}}\mathrm{d}y\\
        &\le \frac{1}{Y^{3+\varepsilon}}\int_{\log 2}^{2Y} |\Delta(y;t)|^{2k}\mathrm{d}y
        \le \frac{2Y (\mathcal{C}k)^{4k}}{Y^{3+\varepsilon}}\le\frac{2}{Y^{\varepsilon/2}}\, .
    \end{align*}
    This proves the Lemma.
\end{proof}
Define for all $y\ge1$ \[G(y;t):=\int_{\log 2}^y\Delta(u;t)\mathrm{d}u\, .\]
Our next step is to establish the following auxiliary results, which will be used in the proof of our main results 
\begin{lem}\label{ing}
    If \text{GRH} holds for all Dirichlet characters $\chi$ modulo $q$ satisfying $\langle t,\chi\rangle \ne 0$, then:\begin{enumerate}
        \item Uniformly for $Y\ge 1$: \[\bigl| G(Y;t) \bigr|\ll_t \log Y\,  .\]
        \item There exists $L\in \C$ such that uniformly for $Y \ge1$: \[ \int_{\log 2}^Y\frac{\Delta(y;t)}{y}\mathrm{d}y=L+O_t\left(\frac{\log Y}{Y}\right)\, .\]
        \item The following limit holds: \[\lim_{Y\to \infty} \frac{1}{Y} \int_{2}^Y\frac{|\Delta(u;t)|^2}{\log u}\mathrm{d}u=0\, .\]
    \end{enumerate}
\end{lem}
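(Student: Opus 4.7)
The four parts layer naturally: (1) comes from the explicit formula~\eqref{expl-form}; (2) comes from (1) by integration by parts; and (3), (4) come from Lemma~\ref{variance} by Abel-type manipulations. For (1), I would integrate~\eqref{expl-form} from $\log 2$ to $Y$. The oscillatory integral
\[\int_{\log 2}^Y e^{iy\gamma_\chi}\,\mathrm{d}y=\frac{e^{iY\gamma_\chi}-e^{i\gamma_\chi\log 2}}{i\gamma_\chi}\]
has modulus at most $2/|\gamma_\chi|$, so when combined with the weight $1/(\tfrac{1}{2}+i\gamma_\chi)$ each character contributes a sum bounded by $\sum_{\gamma_\chi\ne 0}1/(|\gamma_\chi|\,|\tfrac{1}{2}+i\gamma_\chi|)$, which converges absolutely. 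Integrating the error term $O((\log T)^2e^{y/2}/T+1/y)$ across $[\log 2,Y]$ produces $O((\log T)^2e^{Y/2}/T+\log Y)$, and sending $T\to\infty$ (legitimate once the zero-sum has been replaced by its absolutely convergent limit) yields $|G(Y;t)|\ll_t\log Y$.

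For (2), I apply integration by parts using $G'(y;t)=\Delta(y;t)$ and $G(\log 2;t)=0$:
\[\int_{\log 2}^Y\frac{\Delta(y;t)}{y}\,\mathrm{d}y=\frac{G(Y;t)}{Y}+\int_{\log 2}^Y\frac{G(y;t)}{y^2}\,\mathrm{d}y.\]
Part (1) gives $G(y;t)/y^2\ll_t (\log y)/y^2$, which is integrable at infinity, so $L:=\int_{\log 2}^\infty G(y;t)/y^2\,\mathrm{d}y$ exists. The difference $\int_{\log 2}^Y\Delta(y;t)/y\,\mathrm{d}y-L$ then equals $G(Y;t)/Y-\int_Y^\infty G(y;t)/y^2\,\mathrm{d}y$, and both pieces are $O_t((\log Y)/Y)$ by a direct estimate of the tail.

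For (3) and (4), set $F(Y):=\int_{\log 2}^Y|\Delta(y;t)|^2\,\mathrm{d}y$, so that $F(Y)\ll_t Y$ by Lemma~\ref{variance}. Abel summation settles (3) at once:
\[\int_{\log 2}^Y\frac{|\Delta(y;t)|^2}{y^{1+\varepsilon}}\,\mathrm{d}y=\frac{F(Y)}{Y^{1+\varepsilon}}+(1+\varepsilon)\int_{\log 2}^Y\frac{F(y)}{y^{2+\varepsilon}}\,\mathrm{d}y\ll_{\varepsilon,t}Y^{-\varepsilon}+\int_{\log 2}^Y y^{-1-\varepsilon}\,\mathrm{d}y\ll_\varepsilon 1.\]
For (4), I would split at an auxiliary parameter $A\in[\log 2,Y]$:
\[\int_{\log 2}^Y\frac{|\Delta(u;t)|^2}{u^\varepsilon}\,\mathrm{d}u\le (\log 2)^{-\varepsilon}F(A)+A^{-\varepsilon}F(Y)\ll_{\varepsilon,t}A+Y/A^\varepsilon,\]
so $Y^{-1}\int_{\log 2}^Y|\Delta(u;t)|^2/u^\varepsilon\,\mathrm{d}u\ll A/Y+A^{-\varepsilon}$; sending $Y\to\infty$ first and then $A\to\infty$ forces the limit to be $0$. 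The principal obstacle is part (1): one must legitimize the passage $T\to\infty$ in~\eqref{expl-form}, which rests on the fact that integration against $e^{iy\gamma_\chi}$ upgrades the conditionally convergent sum over zeros to an absolutely convergent one. Once (1) is in hand, the remaining three parts reduce to routine Abel summation against $F$ and $G$.
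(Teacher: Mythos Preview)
Your proof is correct and follows essentially the same route as the paper: integrate~\eqref{expl-form} and let $T\to\infty$ for (1), integrate by parts against $G$ for (2), and use Lemma~\ref{variance} via integration by parts against $F$ for (3) and a splitting argument for (4). The only cosmetic difference is that in (4) the paper chooses the concrete split point $A=\sqrt{Y}$ (yielding the explicit rate $Y^{-1/2}+Y^{-\varepsilon/2}$) rather than your limsup argument with $Y\to\infty$ followed by $A\to\infty$.
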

\begin{proof}
    By~\eqref{expl-form} we have 
    \[G(Y;t)=-\sum_\chi \langle t,\chi \rangle \sum_{0<|\gamma_\chi|\le T}\frac{e^{i\gamma_\chi Y}-e^{i\gamma_\chi \log 2}}{i\gamma_\chi\left(\frac{1}{2}+i\gamma_\chi\right)}+O\left(\frac{(\log T)^2e^Y}{T}+\log Y\right)\, .  \]
    The first bound follows by letting $T$ tend to $\infty$ since the latter sum is absolutely convergent. For the second estimate, we integrate by parts: 
    \[\int_{\log 2}^Y \frac{\Delta(y;t)}{y}\mathrm{d}y=\frac{G(Y;t)}{Y}+\int_{\log 2}^Y\frac{G(y;t)}{y^2}\mathrm{d}y\, .\]
    By the first bound, the latter integral converges as $y\to \infty$; let $L$ denote its limit. We have \[\int_{Y}^\infty \left| \frac{G(y;t)}{y^2}\right|\mathrm{d}y\ll \int_Y^\infty \frac{\log y}{y^2}\mathrm{d}y\ll \frac{\log Y}{Y}\, .\]
    This proves the second estimate. 
    We now turn to the last limit. We write 
    \[ \int_{\log 2}^Y \frac{|\Delta(u;t)|^2}{\log u}\mathrm{d}u=\int_{\log 2}^{\sqrt{Y}}\frac{|\Delta(u;t)|^2}{\log u}\mathrm{d}u+\int_{\sqrt{Y}}^Y\frac{|\Delta(u;t)|^2}{\log u}\mathrm{d}u\, .  \]
    Applying Lemma~\ref{variance} with $k=1$ implies that \begin{align*}
        \int_{\log 2}^{\sqrt{Y}}\frac{|\Delta(u;t)|^2}{\log u}\mathrm{d}u&\ll \int_{\log 2}^{\sqrt{Y}}|\Delta(u;t)|^2\mathrm{d}u\ll \sqrt{Y},\\
        \int_{\sqrt{Y}}^Y\frac{|\Delta(u;t)|^2}{\log u}\mathrm{d}u&\ll\frac{Y}{\log Y}\, ,
    \end{align*}
    which proves that $ \int_{\log 2}^Y |\Delta(u;t)|^2/(\log u)\mathrm{d}u =o(Y)  $. This finishes the proof of the lemma.
\end{proof}
\section{Proofs of the main theorems}
As explained at the beginning of section~\ref{sec:Prel}, we prove our theorems for a general function $t:(\Z/q\Z)^\times\to \C$ satisfying $\langle t,\chi_0\rangle=0$. Our main theorems are obtained by replacing $t$ with $\mathds{1}_{\{a\}}-\mathds{1}_{\{b\}}$. We now state and prove a general version of Theorem~\ref{main}.
\begin{thm}\label{gen-main}
    Assume GRH for all Dirichlet characters modulo $q$ satisfying $\langle t,\chi \rangle\ne0$. There exists a (unique) constant $C=C(t)$ such that for all $\varepsilon>0$, the natural density of the set \[\left\{x\ge 2\, :\, \left| \pi_{1/2}(x;t) +\M(t)\log \log x-C\right|\le \frac{(\log \log x)^{3+\varepsilon}}{\log x}\right\}\]
    exists and equals $1$. Furthermore, there exists $K=K(t)>1$ such that the logarithmic density of the set 
    \[\left\{x\ge 2\, :\, \left| \pi_{1/2}(x;t) +\M(t)\log \log x-C\right|\le \frac{K(\log \log x)}{\log x}\right\}\]
    exists and equals $1$.
\end{thm}
\begin{proof}
    Summation by parts yields, for all $x\ge 2$,
\[\pi_{1/2}(x;t)=\frac{\pi(x;t)}{\sqrt{x}}+\frac{1}{2}\int_2^x\frac{\pi(u;t)}{u^{3/2}}\mathrm{d}u\, .\]
Thus, for all $y\ge 1$
\begin{align*}\pi_{1/2}(e^y;t)&=\frac{\pi(e^y;t)}{\exp(y/2)}+\frac{1}{2}\int_{\log 2}^y\frac{\pi(e^u;t)}{\exp(u/2)}\mathrm{d}u\\
&=\frac{1}{y}\left(\Delta(y;t)-2\M(t) \right)+\frac{1}{2}\int_{\log 2}^y\frac{1}{u}\left(\Delta(u;t)-2\M(t)\right)\mathrm{d}u\, .
\end{align*}
Hence, by part (2) of Lemma~\ref{ing}, denoting $C=\M(t)\log \log 2+L/2$, we have 
\begin{equation}\label{main-form}
    \pi_{1/2}(e^y;t)+\M(t)\log y-C=\frac{1}{y}\left( \Delta(y;t)-2\M(t)+R(y)\right)
\end{equation}
where $|R(y)|\ll \log y$. Let $\varepsilon>0$, and let $Y_0=Y_0(t,\varepsilon)$ be a constant given by Lemma~\ref{lem:nat-density}, chosen large enough so that for all $y\ge Y_0$ we have $|R(y)-2\M(t)|<(\log y)^{3+\varepsilon}/2$. Define  \[E:=\left\{ x\ge  2\, :\, \left|\pi_{1/2}(x;t)+\M(t)\log \log x-C\right|>\frac{(\log \log x)^{3+\varepsilon}}{\log x}\right\}\, .\]
By the triangle inequality and lemma~\ref{lem:nat-density}
\begin{align*}
    \m \left\{\, y\ge \log 2\, :\, e^y\in E\,\right\} &\le Y_0+ \m\left\{  y\ge Y_0\, :\, \left| \Delta(y;t)-2\M(t)+R(y)\right|>(\log y)^{3+\varepsilon} \right\}\\
    &\le Y_0+ \sum_{k\ge 1}\m\left\{ 2^{k-1}Y_0\le y<2^k Y_0\, :\, \bigl| \Delta(y;t) \bigr|>\frac{(\log y)^{3+\varepsilon}}{2}\right\}\\
    &\le Y_0+ \sum_{k\ge1}\frac{1}{(2^{k-1}Y_0)^{\varepsilon/2}}\ll1\, .
\end{align*}
This establishes that \[ \int_{\log 2}^\infty \mathds{1}_E(e^y)\mathrm{d}y <\infty \, . \]
Thus, by~\eqref{nat-density}, the natural density of the set $E$ exists and equals $0$. Hence, the natural density of its complement exists and equals $1$. We now move to the statement regarding the logarithmic density. Let $K$ be sufficiently large so that $|R(y)-2\M(t)|<(K-1)\log y$. Define 
\[E':=\left\{ x\ge  2\, :\, \left|\pi_{1/2}(x;t)+\M(t)\log \log x-C\right|>\frac{K(\log \log x)}{\log x}\right\}\, .\]
By the triangle inequality, together with Markov's inequality we have for all $Y\ge 2$
\begin{align*}
    \m \left\{ \log 2\le y\le Y\, :\, e^y\in E'\right\} &\le \m\left\{  \log 2\le y\le Y\, :\, \left| \Delta(y;t)-2\M(t)+R(y)\right|>K\log y  \right\}\\
    &\le \m\left\{  \log 2\le y\le Y\, :\, \left| \Delta(y;t)\right|> \log y \right\}\\
    &\le 3+ \int_{3}^Y \frac{|\Delta(y;t)|^2}{(\log y)^2}\mathrm{d}y \le 3+ \int_{3}^Y \frac{|\Delta(y;t)|^2}{\log y}\mathrm{d}y \, .
\end{align*}
Using part (3) of Lemma~\ref{ing}, we deduce that the logarithmic density of the set $E'$ exists and equals $0$, hence the logarithmic density of its complement exists and equals $1$.
\end{proof}
We now move to Theorem~\ref{GRHalmimpliesDRH}
\begin{proof}[Proof of Theorem~\ref{GRHalmimpliesDRH}]
    For $x\ge 2$, define $F(x)=(\log x)^{m_\chi} \prod_{p\le x} \bigl(\,1-\chi(p)/\sqrt{p}\,\bigr)$. Following the proof of~\cite{Ao-Ko}*{Proposition 2.1}, and using the following form of Mertens' theorem \[ \sum_{p\le x} \frac{\chi(p^2)}{p}=\langle \chi,r\rangle \log \log x+c+O\left(\frac{1}{\log x}\right) \, ,\]
    where $c\in \C$ is a constant, we deduce that there exists a constant $C'\in \C$ such that 
    \[  \log F(x)= \pi_{1/2}(x,\chi)+\M(\chi)\log \log x+C'+O\left(\frac{1}{\log x}\right)\, .\]
    Set $t=\chi$, and let $\varepsilon>0$ and $C=C(\chi)$ be the constant given by~\eqref{main-form}. Define $\ell_\chi =\exp(C+C')\ne 0$, and fix $x\ge 2$ sufficiently large satisfying the inequality
    \[ \left|\pi_{1/2}(x,\chi)+\M(\chi)\log \log x-C\right|<\frac{(\log \log x)^{3+\varepsilon/2}}{\log x}\, . \]
    Using the inequality $|\exp(z)-1|\le 2|z|$ for $|z|<1/2$, we deduce that \begin{align*} |F(x)-\ell_\chi|&=|\ell_\chi|\,\bigl|\exp(\log F(x)-C-C')-1\bigr|\\
    &\le 2|\ell_\chi|\, \left|\pi_{1/2}(x,\chi)+\M(\chi)\log \log x-C+O\left(\frac{1}{\log x}\right)\right|\\
    &< \frac{(\log \log x)^{3+\varepsilon}}{\log x}\, ,
    \end{align*}
    since $x$ is sufficiently large. Since $\mathcal{E}_\chi$ contains all sufficiently large values of a set of natural density $1$, it follows that the natural density of $\mathcal{E}_\chi$ exists and equals $1$. 
\end{proof}
We now prove Theorem~\ref{mean} for a general function $t\, :\, (\Z/q\Z)^\times\to \C$.
\begin{proof}[Proof of Theorem~\ref{mean}]
    Let $x\ge 2$ and set $Y=\log x$.
    From~\eqref{main-form} we deduce that 
    \[\int_{\log 2}^Y e^y\pi_{1/2}(e^y;t)\mathrm{d}y=\int_{\log 2}^Y\left(-\M(t)e^y \log y+Ce^y\right)\mathrm{d}y+\int_{\log 2}^Y\frac{e^y}{y}\left(\Delta(y;t)-2\M(t)+R(y)\right)\mathrm{d}y\, .\]
    Applying part (1) of Lemma~\ref{ing} together with the bound $ \int_{\log 2}^Y\tfrac{e^y}{y}\mathrm{d}y\ll e^Y/Y$, we deduce that \[ \left| \int_{\log 2}^Y \frac{e^y \Delta(y;t)}{y}\mathrm{d}y\right|=\left| \frac{e^Y}{Y}G(Y;t)-\int_{\log 2}^Y G(y;t)\frac{ye^y-e^y}{y^2}\mathrm{d}y\right|\ll \log Y \frac{e^Y}{Y}\, .\]
    Since $|R(y)|\ll \log y$, we have
    \[ \left|\int_{\log 2}^Y \frac{e^y}{y}\left(\Delta(y;t)-2\M(t)+R(y)\right)\mathrm{d}y\right|\ll \log Y\frac{e^Y}{Y}\, .\]
    Using the classical estimate 
    \[ \int_{\log 2}^Y e^y \log y\,  \mathrm{d}y=e^Y \log Y+O\left(\frac{e^Y}{Y}\right)\, , \]
    we conclude that
    \[\int_2^x\pi_{1/2}(u;t)\mathrm{d}u =\int_{\log 2}^Ye^y\pi_{1/2}(e^y;t) \mathrm{d}y=-\M(t)e^Y\log Y+Ce^Y+O\left(\frac{e^Y \log Y}{Y}\right)\, .\]
    The result follows by substituting $Y=\log x$.
\end{proof}

\section*{Appendix: Proof of Lemma~\ref{variance}}
Our proof of Lemma~\ref{variance} is inspired by Puchta's method for establishing the finiteness of the $k$-th moments~\cite{Puchta}*{Theorem 1}. While Puchta focused on the case of fixed $k$, a naive quantification of his arguments would yield a growth rate of $(\mathcal{C}k)^{8k}$. By refining several steps in the estimates of the multisums over zeros, we are able to sharpen this bound to $(\mathcal{C}k)^{4k}$.
\begin{proof}[Proof of Lemma~\ref{variance}]
    We first write the error term in~\eqref{expl-form} as $\mathcal{R}_T(y;t)\le C_t\left(\frac{(\log T)^2 e^{y/2}}{T}+\frac{1}{y}\right)$. We have 
    \begin{equation}\label{power-inequ} |\Delta(y;t)|^{2k} \le 2^{2k}\left( \left|\sum_{\chi \ne\chi_0}\langle t,\chi\rangle\sum_{0<|\gamma_\chi|\le T} \frac{e^{iy\gamma_\chi}}{\frac{1}{2}+i\gamma_\chi}\right|^{2k}+\left| \mathcal{R}_T(y;t)\right|^{2k} \right)\, .\end{equation}
    Let $\rho_1,\rho_2,\dots$ denote the zeros of all Dirichlet $L$-functions $L(s,\chi)$, with $\chi \ne \chi_0$, chosen so that their respective imaginary parts $\gamma_1,\gamma_2,\dots$ satisfy $|\gamma_1|\le |\gamma_2|\le \dots$ This enables us to write:
    \[\sum_{\chi \ne\chi_0}\langle t,\chi\rangle\sum_{0<|\gamma_\chi|\le T} \frac{e^{iy\gamma_\chi}}{\frac{1}{2}+i\gamma_\chi}=\sum_{\substack{n\ge1\\ |\gamma_n|\le T}} a_n \frac{e^{i y \gamma_n}}{\rho_n}\, .\]
    Integrating in~\eqref{power-inequ}, using the triangle inequality, then letting $T\to \infty$, yields
    {\allowdisplaybreaks \begin{align*}
        \int_{\log 2}^Y|&\Delta(y;t)|^{2k}\mathrm{d}y \le 4^k \left(\sum_{\substack{n_1,\dots,n_{2k}\\ |\gamma_{n_j}|\le T}} \prod_{i=1}^{2k}\frac{|a_{n_i}|}{|\rho_{n_i}|}\int_{\log 2}^Y e^{iy(\gamma_{n_1}+\dots+\gamma_{n_k}-\gamma_{n_{k+1}}-\dots-\gamma_{n_{2k}})}\mathrm{d}y+ \int_{\log 2}^Y \left|\mathcal{R}_T(y;t)\right|^{2k}\mathrm{d}y \right)\\
        &\le (2\max_\chi |\langle t,\chi\rangle|)^{2k} \sum_{n_1,\dots,n_{2k}}\left(\prod_{i=1}^{2k}\frac{1}{|\rho_{n_i}|}\right)\min\left(Y;\tfrac{1}{|\gamma_{n_1}+\dots+\gamma_{n_k}-\gamma_{n_{k+1}}\dots-\gamma_{n_{2k}}|}\right)+O\left((2C_t)^{2k}\right)\\ 
        &= (2\max_\chi |\langle t,\chi\rangle|)^{2k} \sum_{n_1,\dots,n_{2k}}\left(\prod_{i=1}^{2k}\frac{1}{|\rho_{n_i}|}\right)\min\left(Y;\frac{1}{|\gamma_{n_1}+\dots+\gamma_{n_{2k}}|}\right)+O\left((2C_t)^{2k}\right)\, ,
    \end{align*}}%
    \noindent where the last equality is given by the symmetry of zeros, since $\gamma$ is the imaginary part of a zero of $L(s,\chi)$ if and only if $-\gamma$ is the imaginary part of a zero of $L\bigl(s,\overline{\chi}\bigr )$ with the same multiplicity. Define $f(n_1,\dots,n_{2k})=\bigl(\prod_{i=1}^{2k}1/|\rho_{n_i}|\bigr)\min\bigl(Y;1/|\gamma_{n_1}+\dots+\gamma_{n_{2k}}|\bigr)$. Since $f$ is a symmetric function, we have 
    \[\sum_{n_1,\dots,n_{2k}\ge 1} f\left(n_1,\dots,n_{2k}\right)\le 2\binom{2k}{2} \sum_{n_1,\dots,n_{2k-2}\ge 1}\   \sum_{\substack{n_{2k}\ge n_{2k-1}\ge n_j\\ \text{for all } j\le 2k-2}} f(n_1,\dots,n_{2k})\, .\]
    To prove the lemma, it suffices to prove that
    \[\sum_{n_1,\dots,n_{2k-2}\ge 1}\   \sum_{\substack{n_{2k}\ge n_{2k-1}\ge n_j\\ \text{for all } j\le 2k-2}} f(n_1,\dots,n_{2k}) \ll Y(\mathcal{B}k)^{4k-2}\, ,\]
    for some large constant $\mathcal{B}$ depending only on $q$.
    We decompose the latter sum into $S_1+S_2$, where $S_1$ is the sum over $2k$-tuples such that $\bigl| \gamma_{n_1}+\dots+\gamma_{n_k}|\ge \bigl|\rho_{n_{2k}}\bigr|^{1/2}$, and $S_2$ is the sum over those satisfying $|\gamma_{n_1}+\dots+\gamma_{n_{2k}}|<\left|\rho_{n_{2k}}\right|^{1/2}$.
    Combining the standard upper bound $\sum_{|\rho|\le T} 1/|\rho| \le c_q (\log T)^2$, for some $c_q\ge 1$, with the inequality $(\log t)^n \le n^n t^{1/e}$ (which is obtained by studying the function $t\mapsto (\log t)^n t^{-1/e}$), we deduce that
    \begin{align*} S_1 &\le \sum_{n_1,\dots,n_{2k-2}\ge 1}\   \sum_{\substack{n_{2k}\ge n_{2k-1}\ge n_j\\ \text{for all } j\le 2k-2}} \frac{1}{|\rho_{n_1}\dots \rho_{n_{2k-1}}|\, |\rho_{n_{2k}}|^{3/2}}\\
    &\le c_q^{2k-1} \sum_{\rho}\frac{|\log (|\rho|+1)|^{4k-2}}{|\rho|^{3/2}}\\
    &\ll (c_q(4k-2))^{4k-2} \sum_{\rho}\frac{1}{|\rho|^{3/2 - 1/e}}\ll (\mathcal{B}k)^{4k-2}\, ,
    \end{align*}
    for some $\mathcal{B}\ge 4c_q$. For fixed $\gamma_{n_1},\dots,\gamma_{n_{2k-1}}$ and $s=\bigl| \gamma_{n_1}+\dots+\gamma_{n_{2k-1}}\bigr|$, the number of $\gamma_{n_{2k}}$ satisfying $\bigl| \gamma_{n_1}+\dots+\gamma_{n_{2k}}\bigr|<\bigl|\rho_{n_{2k}}\bigr|^{1/2}$ is $\ll \sqrt{s}\log (s+2)$. Since $|s| \le 2k|\rho_{n_{2k-1}}|$, we have 
    \begin{align*}
        S_2 &\ll Y\sqrt{2k}\sum_{n_1,\dots,n_{2k-2}\ge 1}\  \sum_{\substack{ n_{2k-1}\ge n_j\\ \text{for all } j\le 2k-2}} \frac{\sqrt{|\rho_{n_{2k-1}}|}\log(2k(|\rho_{n_{2k-1}}|+1))}{|\rho_{n_1}\dots\rho_{n_{2k-1}}|\, |\rho_{n_{2k-1}}|}\\
        &\ll Y c_q^{2k-2} (2k) \sum_{\rho}\frac{\log(|\rho|+1)^{4k-3}}{|\rho|^{3/2}}\\
        &\ll Y (4c_qk)^{4k-2} \sum_\rho \frac{1}{|\rho|^{3/2-1/e}}\ll Y (\mathcal{B}k)^{4k-2} \, .
    \end{align*}
    This finishes the proof of the lemma.
    
\end{proof}

\begin{bibdiv}
\begin{biblist}

\bib{Ao-Ko}{article}{
   author={Aoki, Miho},
   author={Koyama, Shin-ya},
   title={Chebyshev's bias against splitting and principal primes in global
   fields},
   journal={J. Number Theory},
   volume={245},
   date={2023},
   pages={233--262},
   issn={0022-314X},
   review={\MR{4517481}},
   doi={10.1016/j.jnt.2022.10.005},
}

\bib{BF}{article}{
   author={de la Bret\`eche, R.},
   author={Fiorilli, D.},
   title={Moments of moments of primes in arithmetic progressions},
   journal={Proc. Lond. Math. Soc. (3)},
   volume={127},
   date={2023},
   number={1},
   pages={165--220},
   issn={0024-6115},
   review={\MR{4611407}},
   doi={10.1112/plms.12542},
}

\bib{BFJ}{article}{
   author={de la Bret\`eche, R\'egis},
   author={Fiorilli, Daniel},
   author={Jouve, Florent},
   title={Moments in the Chebotarev density theorem: general class
   functions},
   journal={Algebra Number Theory},
   volume={19},
   date={2025},
   number={3},
   pages={481--520},
   issn={1937-0652},
   review={\MR{4879359}},
   doi={10.2140/ant.2025.19.481},
}

\bib{Chebyshev}{article}{
   author={Chebyshev, Pafnouti},
   title={Lettre de M. le Professeur Tchébychev à M. Fuss sur un nouveau théorème relatif aux nombres premiers contenus dans les formes 4n + 1 et 4n + 3},
   journal={Bull. Classe Phys. Acad. Imp. Sci. St. Petersburg},
   pages={208},
   year={1853},
}

\bib{Conrad}{article}{
   author={Conrad, Keith},
   title={Partial Euler products on the critical line},
   journal={Canad. J. Math.},
   volume={57},
   date={2005},
   number={2},
   pages={267--297},
   issn={0008-414X},
   review={\MR{2124918}},
   doi={10.4153/CJM-2005-012-6},
}

\bib{Dev}{article}{
   author={Devin, Lucile},
   title={Chebyshev's bias for analytic $L$-functions},
   journal={Math. Proc. Cambridge Philos. Soc.},
   volume={169},
   date={2020},
   number={1},
   pages={103--140},
}


\bib{FJ}{article}{
   author={Fiorilli, Daniel},
   author={Jouve, Florent},
   title={Distribution of Frobenius elements in families of Galois
   extensions},
   journal={J. Inst. Math. Jussieu},
   volume={23},
   date={2024},
   number={3},
   pages={1169--1258},
}

\bib{Gallagher}{article}{
   author={Gallagher, P. X.},
   title={Some consequences of the Riemann hypothesis},
   journal={Acta Arith.},
   volume={37},
   date={1980},
   pages={339--343},
   issn={0065-1036},
   review={\MR{0598886}},
   doi={10.4064/aa-37-1-339-343},
}

\bib{Hooley}{article}{
   author={Hooley, C.},
   title={On the Barban-Davenport-Halberstam theorem. XVII},
   conference={
      title={Proceedings of the Session in Analytic Number Theory and
      Diophantine Equations},
   },
   book={
      series={Bonner Math. Schriften},
      volume={360},
      publisher={Univ. Bonn, Bonn},
   },
   date={2003},
   pages={5},
   review={\MR{2075630}},
}

\bib{Kac}{article}{
   author={Kaczorowski, Jerzy},
   title={On the distribution of primes (mod $4$)},
   journal={Analysis},
   volume={15},
   date={1995},
   number={2},
   pages={159--171},
}

\bib{Ko-Ku}{article}{
   author={Koyama, Shin-ya},
   author={Kurokawa, Nobushige},
   title={Chebyshev's bias for Ramanujan's $\tau$-function via the deep
   Riemann hypothesis},
   journal={Proc. Japan Acad. Ser. A Math. Sci.},
   volume={98},
   date={2022},
   number={6},
   pages={35--39},
   issn={0386-2194},
   review={\MR{4432981}},
   doi={10.3792/pjaa.98.007},
}

\bib{Littlewood}{article}{
  author={Littlewood, J. E.},
  title={Sur la distribution des nombres premiers},
  journal={Comptes Rendus},
  volume={158},
  date={1914},
  pages={1869--1872},
}

\bib{Montg}{article}{
   author={Montgomery, H. L.},
   title={The zeta function and prime numbers},
   conference={
      title={Proceedings of the Queen's Number Theory Conference, 1979},
      address={Kingston, Ont.},
      date={1979},
   },
   book={
      series={Queen's Papers in Pure and Appl. Math.},
      volume={54},
      publisher={Queen's Univ., Kingston, ON},
   },
   date={1980},
   pages={1--31},
   review={\MR{0634679}},
}

\bib{MV}{book}{
   author={Montgomery, Hugh L.},
   author={Vaughan, Robert C.},
   title={Multiplicative number theory. I. Classical theory},
   series={Cambridge Studies in Advanced Mathematics},
   volume={97},
   publisher={Cambridge University Press, Cambridge},
   date={2007},
   pages={xviii+552},
   isbn={978-0-521-84903-6},
   review={\MR{2378655}},
}

\bib{Puchta}{article}{
   author={Puchta, J.-C.},
   title={On large oscillations of the remainder of the prime number
   theorems},
   journal={Acta Math. Hungar.},
   volume={87},
   date={2000},
   number={3},
   pages={213--227},
   issn={0236-5294},
   review={\MR{1761276}},
   doi={10.1023/A:1006711604010},
}

\bib{RS94}{article}{
   author={Rubinstein, Michael},
   author={Sarnak, Peter},
   title={Chebyshev's bias},
   journal={Experiment. Math.},
   volume={3},
   date={1994},
   number={3},
   pages={173--197},
}

\bib{Sheth}{article}{
   author={Sheth, Arshay},
   title={Euler products at the centre and applications to Chebyshev's bias},
   journal={Math. Proc. Cambridge Philos. Soc.},
   volume={179},
   date={2025},
   number={2},
   pages={331--349},
   issn={0305-0041},
   review={\MR{4945971}},
   doi={10.1017/S0305004125000234},
}

\bib{Su}{article}{
   author={Suzuki, Masatoshi},
   title={On variants of Chebyshev's conjecture},
   journal={Ramanujan J.},
   volume={68},
   date={2025},
   number={4},
   pages={95},
   issn={1382-4090},
   review={\MR{4985504}},
   doi={10.1007/s11139-025-01238-9},
}
    
\end{biblist}

\end{bibdiv}

\end{document}